\def\lf{\left\lfloor}
\def\rf{\right\rfloor}
\newtheorem{theorem}{Theorem}[section]
\newtheorem{proposition}[theorem]{Proposition}
\newtheorem{conjecture}[theorem]{Conjecture}
\newtheorem{problem}[theorem]{Problem}
\title{Colorings with neighborhood parity condition}
\author
{
	Mirko Petru\v{s}evski \thanks{Department of Mathematics and Informatics, Faculty of Mechanical Engineering - Skopje, Republic of North Macedonia. E-Mail: \texttt{mirko.petrushevski@gmail.com}},
	\quad
	Riste \v{S}krekovski\thanks{FMF, University of Ljubljana \& Faculty of Information Studies, Novo mesto, Slovenia. E-Mail: \texttt{skrekovski@gmail.com}}
}
\begin{document}
\maketitle

\begin{abstract}
 In this short paper, we introduce a new vertex coloring whose motivation comes from our series on odd edge-colorings of graphs. A proper vertex coloring $\varphi$ of a graph $G$ is said to be odd if for each non-isolated vertex $x\in V(G)$ there exists a color $c$ such that $\varphi^{-1}(c)\cap N(x)$ is odd-sized. We prove that every simple planar graph admits an odd $9$-coloring, and conjecture that $5$ colors always suffice.
\end{abstract}

\medskip

\noindent \textbf{Keywords:} planar graph, neighborhood, proper coloring, odd coloring.


\section{Introduction}

 All considered graphs in this paper are simple, finite and undirected. We follow~\cite{BonMur08} for all terminology and notation not defined here. A $k$-(vertex-)coloring of a graph $G$ is an assignment $\varphi: V(G)\to\{1,\ldots,k\}$. A coloring $\varphi$ is said to be \textit{proper} if every color class is an independent subset of the vertex set of $G$. A \textit{hypergraph} $\mathcal{H}=(V(\mathcal{H}),\mathcal{E}(\mathcal{H}))$ is a generalization of a graph, its (hyper-)edges are subsets of $V(\mathcal{H})$ of arbitrary positive size. There are various notions of (vertex-)coloring of hypergraphs, which when restricted to graphs coincide with proper graph coloring. One such notion was introduced by Even at al.~\cite{EveLotRonSmo03} (in a geometric setting) in connection with frequency assignment problems for cellular networks, as follows. A coloring of a hypergraph $\mathcal{H}$ is \textit{conflict-free} \textit{(CF)} if for every edge $e\in \mathcal{E}(\mathcal{H})$ there is a color $c$ that occurs exactly once on the vertices of $e$. The \textit{CF chromatic number} of $\mathcal{H}$ is the minimum $k$ for which $\mathcal{H}$ admits a CF $k$-coloring. For graphs, Cheilaris~\cite{Che09} studied the \textit{CF coloring with respect to neighborhoods}, that is, the coloring in which for every non-isolated vertex $x$ there is a color that occurs exactly once in the (open) neighborhood $N(x)$, and proved the upper bound $2\sqrt{n}$ for the CF chromatic number of a graph of order $n$. For more on CF colorings see, e.g., \cite{CheTot11, GleSzaTar14, KosKumLuc12, PacTar09, Smo13}.

A similar but considerably less studied notion (concerning a weaker requirement for the occurrence of a color) was introduced by Cheilaris et al.~\cite{CheKesPal13} as follows. An \textit{odd coloring} of a hypergraph $\mathcal{H}$ is a coloring such that for every edge $e\in \mathcal{E}(\mathcal{H})$ there is a color $c$ with an odd number of vertices of $e$ colored by $c$. Particular features of the same notion (under the name \textit{weak-parity coloring}) have been considered by Fabrici and G\"{o}ring~\cite{FabGor16} (in regard to face-hypergraphs of planar graphs) and also by Bunde et al.~\cite{BunMilWesWu07} (in regard to coloring of graphs with respect to paths, i.e., path-hypergraphs). For various edge colorings of graphs with parity condition required at the vertices we refer the reader to~\cite{AtaPetSkr16, Pet15, KanKatVar18, LuzPetSkr15, LuzPetSkr18, Pet18, PetSkr21}.

\medskip

 In this paper we study certain aspects of odd colorings for graphs with respect to (open) neighborhoods, that is, the colorings of a graph $G$ such that for every non-isolated vertex $x$ there is a color that occurs an odd number of times in the neighborhood $N_G(x)$. Our focus is on colorings that are at the same time proper, and we mainly confine to planar graphs.

\smallskip

Let us denote by $\chi_o(G)$ the minimum number of colors in any proper coloring of a given graph $G$ that is odd with respect to neighborhoods, call this the \textit{odd chromatic number} of $G$. Note that the obvious inequality $\chi(G)\leq\chi_o(G)$ may be strict; e.g., $\chi(C_4)=2$ whereas $\chi_o(C_4)=4$. Similarly, $\chi(C_5)=3$ whereas $\chi_o(C_5)=5$. In fact, the difference $\chi_o(G)-\chi(G)$ can acquire arbitrarily large values. Indeed, let $G$ be obtained from $K_n$ $(n\geq2)$ by subdividing each edge once. Since $G$ is bipartite, $\chi(G)=2$. On the other hand, it is readily seen that $\chi_o(G)\geq n$. Note in passing another distinction between the chromatic number and the odd chromatic number. The former graph parameter is monotonic in regard to the `subgraph relation', that is, if $H\subseteq G$ then $\chi(H)\leq\chi(G)$. This nice monotonicity feature does not hold for the odd chromatic number in general. For example, $C_4$ is a subgraph of the kite $K_4-e$, but nevertheless we have $\chi_o(C_4)=4>3=\chi_o(K_4-e)$.

\medskip

The Four Color Theorem~\cite{AppHak77,RobSanSeyTho96} asserts the tight upper bound $\chi(G)\leq4$ for the chromatic number of any planar graph $G$. One naturally starts wondering about an analogous bound for the odd chromatic number of all planar graphs. Since $\chi_o(C_5)=5$, four colors no longer suffice. It is our belief that five colors always suffice.

\begin{conjecture}
    \label{conj:1}
For every planar graph $G$ it holds that $\chi_o(G)\leq 5$.
\end{conjecture}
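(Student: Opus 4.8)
The plan is to attack the conjecture by the discharging method applied to a minimal counterexample, the standard engine behind sharp planar coloring bounds such as the Four Color Theorem and the odd $9$-coloring above. Let $G$ be a counterexample minimizing $|V(G)|+|E(G)|$; one first performs the standard connectivity and minimum-degree reductions, and then exploits sparseness. Since $G$ is planar we have $|E(G)|\le 3|V(G)|-6$, so the combined charge $\mu(v)=d(v)-6$ on vertices together with $\mu(f)=2\,\mathrm{deg}(f)-6$ on faces sums to $-12$ by Euler's formula. The crux is then to prove a sufficiently rich family of \emph{reducible configurations} — local patterns that cannot occur in $G$ — and discharging rules redistributing charge so that every vertex and face ends nonnegative, yielding the contradiction $-12\ge 0$.

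The heart of the difficulty, and what separates $\chi_o(G)\le 5$ from the weaker $\chi_o(G)\le 9$, is the reducibility step, because the parity requirement is nonlocal. Concretely, given an odd $5$-coloring of a smaller graph $G-v$, extending it to $G$ requires not only that $v$ receive a color distinct from its neighbors and appearing oddly in $N(v)$, but also that assigning that color not destroy the sole odd color at any neighbor $x$ of $v$, since coloring $v$ flips the parity of one color in each such $N(x)$. I would therefore aim to establish extension lemmas of the form: a vertex of degree $\le k$ whose neighborhood avoids certain ``parity-fragile'' patterns is reducible, together with more refined configurations controlling adjacent low-degree vertices and their second neighborhoods. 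The $C_5$ example warns that these lemmas must be delicate, since a bare $5$-cycle already forces all five colors.

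An alternative, and to my mind the most suggestive, route is tailored to the value $5=4+1$: start from a proper $4$-coloring guaranteed by the Four Color Theorem and reserve the fifth color purely to repair parity. Call a vertex \emph{deficient} if every color occurs an even number of times in its neighborhood; such a vertex has no odd color and must be fixed. One fixes $x$ by recoloring a single neighbor to color $5$, which makes $5$ occur exactly once in $N(x)$. The plan is to select an independent set $S$ of vertices to recolor so that every deficient vertex is incident to exactly one vertex of $S$, each recolored vertex stays properly colored and itself retains an odd color, and no new deficiencies appear. If the vertices of $S$ can be taken pairwise at distance $\ge 3$, their parity effects decouple and the verification becomes local; the task then reduces to a covering/selection problem on the planar structure, again amenable to discharging or to a greedy argument over a suitable vertex ordering.

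The main obstacle I expect is precisely the interference between repairs: recoloring a neighbor $y$ of a deficient vertex to color $5$ simultaneously perturbs the parities at every neighbor of $y$, so a fix for one vertex can create a deficiency at another, and guaranteeing a globally consistent choice of $S$ — or, in the discharging framework, ruling out all obstructions to extension — is where the argument must do genuinely new work beyond the $9$-color proof above. I would first test both routes on graphs of bounded maximum degree and on triangulations, where the neighborhood parities are most rigidly constrained, in order to calibrate which reducible configurations or which spacing conditions on $S$ are actually attainable, before committing to the full discharging inventory needed to drive the charge count to the desired contradiction.
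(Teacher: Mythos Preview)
The statement is a \emph{conjecture}, not a theorem: the paper does not prove it. The paper's only contribution toward it is the weaker bound $\chi_o(G)\le 9$ (Theorem~1.3), offered explicitly as ``first support'' for the conjecture. So there is no proof in the paper to compare your attempt against, and your proposal---as you yourself frame it---is a research plan, not a proof. Nothing you have written establishes $\chi_o(G)\le 5$.

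Both routes you sketch are plausible starting points, but each is missing the decisive idea, and you are candid about this. For the discharging approach, the obstruction is quantitative: at a vertex $v$ of degree $d$, extending a coloring of $G-v$ must avoid up to $d$ colors for properness and up to $d$ more for parity at the neighbors, so even a $5$-vertex can have ten forbidden colors against a palette of five. The paper's $9$-color argument already requires identification tricks and a chain of structural claims (Claims~7--11) merely to squeeze from roughly eight forbidden colors down to a single available one; cutting the palette to five would demand a fundamentally richer set of reducible configurations that you have not described, let alone verified. For the $4+1$ repair route, the interference problem you name is exactly the crux: recoloring a vertex to color~$5$ flips a parity at every one of its neighbors, and you give no mechanism---no matching argument, no potential function, no structural lemma---guaranteeing a globally consistent choice of the repair set $S$. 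The $C_5$ example already shows that local repairs can be forced and can cascade. Until one of these gaps is closed by a concrete lemma, what you have is an outline of where the difficulty lies, not a proof.
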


The bound in Conjecture~\ref{conj:1} is achieved for infinitely many planar graphs. Namely, let $\mathcal{F}=\{G: G$ is a non-trivial connected graph such that every block of $G$ is isomorphic to $C_5\}$.

\begin{proposition}
    \label{prop:F}
If $G\in\mathcal{F}$ then $\chi_o(G)=5$.
\end{proposition}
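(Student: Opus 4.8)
The plan is to prove both inequalities $\chi_o(G)\ge 5$ and $\chi_o(G)\le 5$ by peeling off leaf blocks of the block-cut tree of $G$, exploiting a single local observation about leaf blocks. Since every block is a copy of $C_5$ and $G$ is non-trivial, either $G=C_5$ or the block-cut tree has at least two leaf blocks, each meeting the rest of $G$ in exactly one cut vertex. Fix such a leaf block $B$ with cut vertex $c$ and label its cycle $c-u_1-u_2-u_3-u_4-c$, so that $u_1,u_2,u_3,u_4$ are private to $B$ and hence satisfy $N_G(u_i)=N_B(u_i)$. The oddness condition at each $u_i$ forces its two neighbors inside $B$ to receive distinct colors; combining these four constraints with properness, one checks that among the five vertices of $B$ the only pair allowed to share a color is $\{u_1,u_4\}$, the two neighbors of $c$ inside $B$. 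This is the one local fact I would use repeatedly.

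For the lower bound I would argue by induction on the number of blocks, equivalently by taking a counterexample with fewest blocks. If $G=C_5$ then $\chi_o(G)=5$ by the computation already recorded for $C_5$. Otherwise suppose $\varphi$ is a proper odd coloring of $G$ with at most $4$ colors and pick a leaf block $B$ as above. Since the five vertices of $B$ may repeat a color only on $\{u_1,u_4\}$, the pigeonhole principle forces $\varphi(u_1)=\varphi(u_4)$. Now delete the private vertices, leaving $G'=G-\{u_1,u_2,u_3,u_4\}\in\mathcal{F}$ with one fewer block and $c$ still non-isolated. The restriction of $\varphi$ is still proper and still odd at every vertex other than $c$, since only $N(c)$ changed. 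The crucial point is that the two deleted neighbors $u_1,u_4$ of $c$ carried the same color, so their removal deletes an even number of occurrences of each color from $N(c)$ and therefore preserves the parity of every color there; hence $\varphi|_{G'}$ is again a proper odd $4$-coloring, contradicting minimality.

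For the upper bound I would run the same peeling in reverse, building a proper odd $5$-coloring by induction on blocks. The base case $G=C_5$ is handled by any rainbow coloring, in which each vertex sees its two neighbors in distinct colors. For the step, remove a leaf block as above to obtain $G'$, take a proper odd $5$-coloring $\varphi'$ of $G'$ by induction, and extend it over $B$ as follows: keep $\varphi'(c)$, set $\varphi(u_1)=\varphi(u_4)=\alpha$ for some color $\alpha\ne\varphi'(c)$, and give $u_2,u_3$ two distinct colors chosen from the three colors outside $\{\alpha,\varphi'(c)\}$. A direct check shows this makes $B$ properly colored and satisfies the oddness condition at each of $u_1,u_2,u_3,u_4$. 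Finally, since $u_1$ and $u_4$ receive the same color $\alpha$, attaching them to $c$ adds an even number of occurrences of each color to $N(c)$, so all parities in $N(c)$ are unchanged and the oddness at $c$ is inherited from $\varphi'$, while oddness at every other vertex is untouched.

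The step I expect to require the most care is the lower bound, precisely because the odd chromatic number is not monotone under taking subgraphs, so one cannot simply invoke $\chi_o(C_5)=5$ through the inclusion $C_5\subseteq G$. The device that rescues the argument is the parity-preservation observation: a leaf block always contributes a monochromatic pair to its cut vertex, and deleting or adding such a pair leaves all neighborhood parities intact. Verifying that only $\{u_1,u_4\}$ may repeat a color inside a leaf block, and that $c$ indeed remains non-isolated after peeling, are the routine but load-bearing checks; everything else reduces to the single-block case $C_5$.
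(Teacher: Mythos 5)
Your proposal is correct and takes essentially the same route as the paper's proof: peel an endblock at its cut vertex, use properness together with the oddness condition at the four private degree-$2$ vertices to show that only the cut vertex's two block-neighbors may share a color (so a $4$-coloring would force $\varphi(u_1)=\varphi(u_4)$ and restrict, with all neighborhood parities at $c$ intact, to an odd $4$-coloring of the smaller graph, contradicting minimality), and conversely extend an odd $5$-coloring of the smaller graph back over the block. The only cosmetic difference is in the upper-bound extension, where you give $u_1,u_4$ a common color so the parities at $c$ are unchanged, while the paper uses a rainbow extension after permuting colors so that the odd color at the cut vertex avoids the two newly added ones --- both are the same parity-preservation device.
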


\begin{proof}
Consider a counter-example $G$ with minimum number of blocks, $b(G)$. Since $G\neq C_5$, we have that $b(G)\geq2$. Let $B$ be an endblock of $G$, and let $v_1,v_2,v_3,v_4,v_5$ be the vertices of the $5$-cycle $B$ met on a circular traversing that starts at the unique cut vertex $v_1$ contained within $B$. Consider the graph $G'=G-\{v_2,v_3,v_4,v_5\}$. Since $G'\in\mathcal{F}$ and $b(G')=b(G)-1$, it holds that $\chi_o(G')=5$. Take an odd coloring of $G'$ with color set $\{1,2,3,4,5\}$ such that $v_1$ is colored by $1$ and the color $4$ has odd number of occurrences in $N_{G'}(v_1)$. Extend to $V(G)$ by assigning the color $i$ to $v_i$, for $i=2,3,4,5$. This clearly gives an odd $5$-coloring of $G$. Consequently, since we are supposing $\chi_o(G)\neq5$, there must exist an odd coloring $c$ of $G$ with color set $\{1,2,3,4\}$. Upon permutation of colors, we may assume that $c(v_i)=i$ for $1\le i\le4$. Hence, it must be that $c(v_5)=2$. Thus, any color which satisfies the odd-condition in $N_G(v_1)$ has an odd number of occurrences in $N_{G'}(v_1)$ as well. It follows that the restriction of $c$ to $G'$ is an odd $4$-coloring, contradicting the minimality choice of $G$.
\end{proof}

 The main purpose of this paper is to provide the first support to Conjecture~\ref{conj:1} by proving the following.

\begin{theorem}
    \label{mainresult}
For every planar graph $G$ it holds that $\chi_o(G)\leq 9$.
\end{theorem}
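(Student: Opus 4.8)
The plan is to argue by a vertex-minimal counterexample and to reduce everything to re-inserting a single low-degree vertex. Suppose $G$ is a planar graph with $\chi_o(G)>9$ having the fewest vertices; then $G$ is connected and has no isolated vertex, and by minimality $G-v$ admits an odd $9$-coloring for every $v$. The whole task is to \emph{extend} such a coloring across a well-chosen $v$ without spoiling the odd condition anywhere. Exactly as the introduction stresses, the odd condition is non-monotone: re-inserting $v$ alters $N_G(u)$ for every neighbor $u$, and may flip the parity of the very color class that was certifying $u$. Controlling this is the heart of the matter.

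First I would fix the local bookkeeping for the extension. Let $\varphi$ be an odd $9$-coloring of $G-v$ and let $\deg(v)=d$. Inserting $v$ with color $c$ changes, in each $N_G(u)$, only the parity of the class $c$; hence a neighbor $u$ stays satisfied unless $c$ is the \emph{unique} color occurring an odd number of times in $N_{G-v}(u)$ --- call this the forbidden color $b_u$ (when it exists). Thus $c$ is legal as soon as $c\notin\{\varphi(u):u\sim v\}$ (properness) and $c\notin\{b_u:u\sim v\}$ (preservation of all neighbors). Each of these sets has size at most $d$, so a legal $c$ exists whenever $2d\le 8$, i.e. for $d\le 4$. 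The odd condition for $v$ itself is free when $d$ is odd, since the $d$ neighbor-colors cannot all occur evenly; for even $d$ I would secure it by a single local recolor of one neighbor that creates an odd class in $N_G(v)$. Since a planar graph always has a vertex of degree at most $5$, this already forces the minimum-degree vertex of the counterexample to have degree exactly $5$.

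The step I expect to be the main obstacle is precisely this degree-$5$ case, where the naive count gives $2d=10$ potential forbidden colors against only $9$ available. The first gain is that $10>9$ forces at least one coincidence among the five neighbor-colors and the five witnesses $b_u$, but one coincidence only yields a forbidden union of size $\le 9$, which need not leave a free color; I must extract a \emph{second} degree of freedom. The idea is: in the saturated configuration, exactly four colors lie outside the neighbor-palette, and all four must arise as witnesses, so some neighbor $u$ has its witness $b_u$ outside the neighbor-palette. Choosing $\varphi(v)=b_u$ is then proper and keeps $v$'s own odd condition (degree $5$ is odd), while the only neighbors it can harm are those whose unique witness equals this color. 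Each harmed neighbor $u$ now has an all-even neighborhood, and here the key observation is that recoloring \emph{any one} neighbor $w$ of $u$ to a different legal color restores an odd class in $N_G(u)$ (the two classes whose parities flip both become odd). The technical core is to show that such a repair can always be performed simultaneously with properness and with the odd conditions of $w$ and of all of $w$'s other neighbors; the abundance of choices (five neighbors of $v$ to sacrifice, at least five candidate vertices $w$ around each harmed $u$, and nine colors) makes this plausible, but verifying it without a stuck case is where the real work lies.

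If a purely local repair should resist some rigid saturated neighborhood, the fallback is to promote these obstructions to \emph{reducible configurations} and close the argument by discharging: assign charge $\deg(v)-6$ to each vertex and $2\deg(f)-6$ to each face, so that Euler's formula makes the total charge $-12$; the reductions above (no vertex of degree $\le 4$, and the structural restrictions forced on the neighborhoods of degree-$5$ vertices) would then let charge flow from high-degree vertices and large faces to the degree-$5$ vertices until every charge is nonnegative, contradicting the total $-12$. Either way, I expect the delicate degree-$5$ recoloring analysis, not the bookkeeping or the discharging arithmetic, to absorb essentially all of the effort.
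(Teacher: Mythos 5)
There is a genuine gap, in two places. First, your even-degree reductions don't work as stated. For $d=2$ and $d=4$ the parity condition at $v$ itself is not free, and your plan to ``secure it by a single local recolor of one neighbor'' is exactly the kind of cascading repair that your own degree-$5$ discussion admits is the hard part: recoloring a neighbor $x$ flips the parity of two color classes in $N(y)$ for \emph{every} $y$ adjacent to $x$, and any such $y$ may have had a unique odd color, so the repair propagates with no termination argument. The paper avoids recoloring entirely by modifying the graph \emph{before} invoking minimality, so that the inherited coloring automatically has the needed parity feature: for $d=2$ it adds the edge $xy$ between the two neighbors of $v$ (forcing them to receive distinct colors, so each appears exactly once in $N_G(v)$), and for $d=4$ it contracts an edge $vw$ (forcing the color of $w$ to occur exactly once in $N_G(v)$). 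This ``pre-engineer the minor'' idea is absent from your proposal and is used again later, when $v_j$ and $v_{j+2}$ are identified in $G-v$ to force a repeated color on $N_G(v)$.

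Second, and more seriously, your primary plan for the degree-$5$ case --- pick $\varphi(v)=b_u$ for a witness outside the neighbor palette, then locally repair the harmed neighbor $u$ by recoloring some $w\in N(u)$ --- is not carried out, and you concede the core (``verifying it without a stuck case is where the real work lies''). It cannot be carried out for an \emph{arbitrary} $5$-vertex: the paper does not reduce every $5$-vertex, and this is not an accident. Instead it proves that a $5$-vertex has at most one odd-degree neighbor (Claim~2), runs a full discharging argument with three rules to locate a still-negative $5$-vertex $v$ that is surrounded only by triangles and has at most two $8^+$-neighbors (Claim~6), uses that structure to find $v_j,v_{j+2}$ whose only common neighbors are $v,v_{j+1}$ (Claim~7 --- whose proof needs Claim~6 to rule out a vertex adjacent to all five $v_i$'s), performs the identification to pin down the exact color distribution with four colors on $N_G(v)$ and five ``exclusive'' witnesses (Claim~8), and then analyses free hanging colors and convenient $4^+$-faces (Claims~9--11) so that $v$ receives charge at least $1$ --- the contradiction is discharging arithmetic, not a coloring extension around $v$. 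Your fallback sentence names the correct charge function ($d(v)-6$ on vertices, $2d(f)-6$ on faces, total $-12$), but supplies neither the discharging rules nor the reducible configurations, so the proposal stops precisely where the paper's work begins: everything from Claim~2 onward is missing, and the interplay between the structural claims and the rules (e.g., rule (R3) sending charge through edges between $6$-vertices, justified by Claims~9--11) is the actual content of the proof.
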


The rest of the article is organized as follows. In the next section we provide a proof of Theorem~\ref{mainresult}. And in Section~3 we briefly convey some of our ideas for possible further work on the topic of proper colorings of graphs that are odd with respect to neighborhoods.

\section{Proof of Theorem~1.3}

Throughout, we will refer to a coloring of this kind as a \textit{nice coloring}; that is, a nice coloring is a (proper) odd coloring which uses at most $9$ colors. Arguing by contradiction, let $G$ be a counter-example of minimum order $n=n(G)$. Clearly, $G$ is connected and has $n\geq10$ vertices. We proceed to exhibit several structural constraints of $G$.

\bigskip

\noindent \textbf{Claim 1.} \textit{The minimum degree $\delta(G)$ equals $5$.}

\medskip

\noindent Since $G$ is a connected planar simple graph with $n>2$ vertices, the inequality $\delta(G)\leq5$ is a consequence of Euler's formula. Consider a vertex $v$ of degree $d_G(v)=\delta(G)$. If $d_G(v)=1$ or $3$, then take a nice coloring of $G-v$. By forbidding at most six colors at $v$ (namely, at most three colors used for $v$'s neighbors and at most three additional colors in regard to oddness concerning the neighborhoods in $G-v$ of these vertices), the coloring extends to a nice coloring of $G$.

Suppose next that $d_G(v)=2$, and say $N_G(v)=\{x,y\}$. Construct $G'$ by removing $v$ from $G$ plus connecting $x$ and $y$ if they are not already adjacent. By minimality, $G'$ admits a nice coloring $c$.  Say $c(x)=1$ and $c(y)=2$, and let color(s) $1'$ and $2'$, respectively, have odd number of occurrences in $N_{G'}(x)$ and $N_{G'}(y)$. If there are several possibilities for $1'$, then choose $1'\neq2$; do similarly for $2'$ in regard to $1$. Extend the coloring to $G$ by using for $v$ a color different from $1,2,1',2'$. The properness of the coloring is clearly preserved. As for the weak-oddness concerning neighborhoods, $v$ is fine because $1\neq2$. If $1'\neq2$ then $x$ is fine since $1'$ remains to be odd on $N_G(x)$. Contrarily, if $1'=2$ then $c(v)$ is odd on $N_G(x)$. Similarly, the vertex $y$ is also fine.

Finally, suppose $d_G(v)=4$ and let $w\in N_G(v)$. Remove $v$ and connect $w$ by an edge to every other non-adjacent neighbor of $v$. The obtained graph $G''$ is simple and planar. Indeed, it can be equivalently obtained from $G$ by contracting the edge $vw$ (if parallel edges arise through possible mutual neighbors of $v$ and $w$, then for each such adjacency a single edge is kept and its copies are deleted). Notice that under any nice coloring of $G''$, the color of $w$ occurs exactly once in $N_G(v)$. Therefore, any such coloring extends to a nice coloring of $G$ by forbidding at most eight colors at $v$.

We conclude that $d_G(v)=5$.\qed

\bigskip

We refer to any vertex of degree $d$ as a \textit{$d$-vertex}. Similarly, a vertex of degree at least $d$ is a \textit{$d^+$-vertex}. Analogous terminology applies to faces in regard to a planar embedding of $G$.

\bigskip

\noindent \textbf{Claim 2.} \textit{If $v$ is a $5$-vertex of $G$, then it has at most one neighbor of odd degree.}

\medskip

\noindent Suppose each of two vertices $u,w\in N_G(v)$ has an odd degree in $G$. Consider a nice coloring $c$ of $G-v$. We intend to extend $c$ to $G$. Since $N_G(u)$ and $N_G(w)$ are odd-sized, no color is blocked at $v$ in regard to oddness in the neighborhoods of $u$ and $w$. Moreover, as $N_G(v)$ is odd-sized as well, the oddness of a color in this particular neighborhood is guaranteed. Therefore, by forbidding at most eight colors at $v$ (all of $c(N_G(v))$ and at most $3$ additional colors concerning oddness in neighborhoods of the vertices forming $N_G(v)\backslash\{u,w\}$), the coloring $c$ extends to a nice coloring of $G$.\qed

\bigskip

Since the graph $G$ is connected, for an arbitrary planar embedding Euler's formula gives
$$|V(G)|-|E(G)|+|F(G)|=2\,,$$ where $F(G)$ is the set of faces.
Our proof is based on the discharging technique. We assign initial charges to the vertices and faces according to the left-hand side of the following equality (which immediately follows from Euler's formula):
$$\sum_{v\in V(G)}(d(v)-6)+\sum_{f\in F(G)}(2d(f)-6)=-12\,.$$

Thus, any vertex $v$ receives charge $d(v)-6$, and any face $f$ obtains charge $2d(f)-6$. As $G$ is simple, for every face $f$ it holds that $d(f)\geq3$, implying that its initial charge is non-negative. By Claim~1, the only vertices that have negative initial charge are the $5$-vertices of $G$ (each is assigned with charge $-1$).

\bigskip

\noindent \textbf{Claim 3.} \textit{Any $d$-face $f\in F(G)$ is incident with at most $\lf \frac{2d}{3} \rf$ $5$-vertices.}

\medskip

\noindent By Claim~2, on a facial walk of $f$ no three consecutive vertices are of degree $5$. From this, the stated upper bound for the number of $5$-vertices incident with $f$ follows immediately.\qed

\bigskip

We use the following discharging rules (cf. Figure~\ref{fig:rules}):

\begin{itemize}

\item[(R1)] Every $4^+$-face $f$ sends charge $1$ to any incident $5$-vertex $v$.

\item[(R2)] Every $8^+$-vertex $u$ sends charge to any adjacent $5$-vertex $v$ if both faces incident with the edge $uv$ are triangles, as follows: if one of these triangles has another $5$-vertex (besides $v$), then the sent charge equals $\frac{1}{3}$; otherwise, the sent charge equals $\frac{1}{2}$. (In view of Claim~2, it cannot be that both these triangles have a $5$-vertex $\neq v$.)

\item[(R3)] Every $4^+$-face $f$ sends through every incident edge $uw$ with $d(u)=d(w)=6$ charge $\frac{1}{2}$ to a $5$-vertex $v$ if $uvw$ is a triangular face and $f$ is not a $4$-face incident with two $5$-vertices.

\end{itemize}

\begin{figure}[ht!]
	$$
		\includegraphics[scale=0.8]{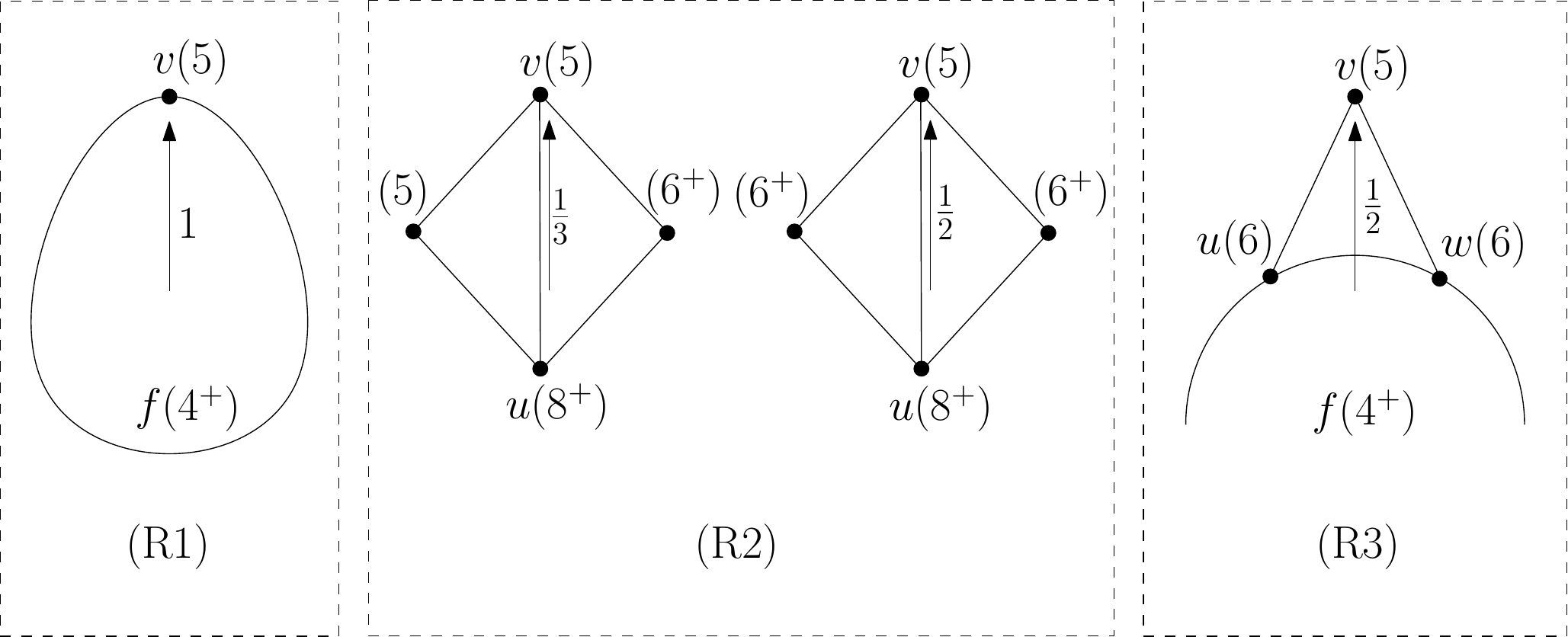}
	$$
	\caption{In (R2), the numbers standing in brackets beside vertices or faces indicate their degrees. In (R3), $f$ is not a $4$-face incident with two $5$-vertices and $uvw$ is an adjacent triangular face.}
	\label{fig:rules}
\end{figure}

\bigskip

\noindent \textbf{Claim 4.} \textit{No $8^+$-vertex becomes negatively charged by applying $(R2)$.}

\medskip

\noindent Suppose there is a $d$-vertex $u$ with $d\geq8$ that becomes negatively charged by applying Rule~2. Recall that its initial charge was $d-6$. Consider a circular ordering of $N_G(u)$ in regard to the embedding of $G$. Note that no three consecutive neighbors $v_i,v_{i+1},v_{i+2}\in N_G(u)$ have received charge from $u$ during the discharging process, for otherwise $v_{i+1}$ contradicts Claim~2. Therefore, $u$ gave charge to at most $\lf \frac{2d}{3} \rf$ of its neighbors, and at most $\frac{1}{2}$ of charge per neighbor. Consequently, $d-6<\frac{2d}{3}\cdot\frac{1}{2}$. Equivalently, $d<9$. So $u$ is an $8$-vertex. Moreover, since we are supposing that it is negatively charged after applying Rule~2, the vertex $v$ gave charge to precisely five $5$-vertices. However, then it is easily seen that the initial charge $2$ of $v$ is reduced by at most $\frac{11}{6}=\frac{1}{3}+\frac{1}{3}+\frac{1}{3}+\frac{1}{3}+\frac{1}{2}$, a contradiction.\qed

\bigskip

\noindent \textbf{Claim 5.} \textit{No $4^+$-face becomes negatively charged by applying $(R1)$ or $(R3)$.}

\medskip

\noindent Suppose there is a $d$-face $f$ that becomes negatively charged by applying Rules~1 or~3. Recall that its initial charge was $2d-6$. For the purposes of this proof, it is useful to think of the charge sent by $f$ according to Rule~3 as follows. Consider any maximal sequence of `consecutive' triangular faces that are adjacent to $f$, and split the sequence into $2$-tuples and a possible $1$-tuple. For every such $2$-tuple of triangular faces, the total charge $1=\frac{1}{2}+\frac{1}{2}$ first goes to their common $6$-vertex, and only afterwards splits evenly and reaches the targeted $5$-vertices (possibly the same). And for every such $1$-tuple the charge $\frac{1}{2}$ coming from $f$ to this face first splits evenly and goes to the pair of $6$-vertices shared with $f$, and only afterwards reaches the targeted $5$-vertex.

 With this perspective in mind, consider a facial walk of $f$ and notice that during the discharging it gives away to each incident vertex charge of either $0,\frac{1}{4}$, or $1$; moreover, no three consecutive vertices receive from $f$ charge $1$ (according to Claim~2). Hence, on at most $\lf \frac{2d}{3} \rf$ occasions $f$ gives charge $1$, and on the other $\lf \frac{d+1}{3} \rf$ occasions $f$ gives charge at most $\frac{1}{4}$. Consequently, $2d-6<\frac{2d}{3}+\frac{d+1}{12}$. Equivalently, $d<\frac{73}{15}$. So $f$ is a $4$-face. However, it is easily seen that a $4$-face gives away at most $2$ of its initial charge (which also equals $2$), a contradiction. \qed

\bigskip

Since the total charge remains negative (it equals $-12$), from Claims~4 and 5 it follows that there is a $5$-vertex $v$ which remains negatively charged even after applying the discharging rules. By (R1) and (R2), we have the following.

\bigskip

\noindent \textbf{Claim 6.} \textit{The $5$-vertex $v$ has only $3$-faces around it, and it is a neighbor of at most two $8^+$-vertices. Moreover, if $v$ neighbors exactly two $8^+$-vertices, then these three vertices have another $5$-vertex as a common neighbor.}

%

\bigskip

Let $v_1,v_2,v_3,v_4,v_5$ be the neighbors of $v$ in a circular order regarding the considered plane embedding, i.e., such that $vv_1v_2,vv_2v_3,vv_3v_4,vv_4v_5,vv_5v_1$ are the five $3$-faces incident with $v$.

\bigskip

\noindent \textbf{Claim 7.} \textit{For some $j=1,2,\ldots,5$, the vertices $v_{j},v_{j+2}$ are not adjacent and their only common neighbors are $v,v_{j+1}$.}

\medskip

\noindent Arguing by contradiction, suppose that every pair of vertices $v_{j},v_{j+2}$ are either adjacent or have a common neighbor $\neq v,v_{j+1}$. This readily implies the existence of a vertex $x\neq v$ that is adjacent to all five vertices $v_1,v_2,v_3,v_4,v_5$.
Now take a nice coloring $c$ of $G-v$, and let $s=|c(N_G(v))|$. In view of the $5$-cycle $v_1v_2v_3v_4v_5$, it holds that $s\in\{3,4,5\}$. In case $s=3$, the coloring $c$ extends to a nice coloring of $G$ since at most $8$ colors are forbidden at $v$. These are the three colors used on $N_G(v)$ and at most five additional colors in regard to `oddness' in $N_{G-v}(v_i)$ for $i=1,2,\ldots,5$. Hence $s\in\{4,5\}$, and from this we are able to further deduce that there are precisely $s$ vertices $v_i (i=1,2,\ldots,5)$ such that $c(v_{i-1})\neq c(v_{i+1})$.

Unless the coloring $c$ extends to $G$, each of the nine available colors is blocked at the vertex $v$, either due to properness or to `oddness'. Consequently, there are at least $(9-s)$ vertices among the $v_i$'s each of which blocks at $v$ a separate color $\neq c(v_1),c(v_2),c(v_3),c(v_4),c(v_5)$ in regard to `oddness' in its neighborhood within $G-v$. Clearly, any such $v_i$ is of even degree in $G$ and any color from $\{c(v_{i-1}),c(v_{i+1})\}$ appears an even number of times in $N_{G-v}(v_i)$. Moreover, for at least four such vertices $v_i$, say $v_1,v_2,v_3,v_4$, it also holds that $c(v_{i-1})\neq c(v_{i+1})$. Noting that the color $c(x)$ occurs in every of the four neighborhoods $N_{G-v}(v_i)$ for $i=1,\ldots,4$, it follows that $c(x)$ has an odd number of appearances in such a neighborhood for at most one vertex $v_i$. Therefore, at least three $v_i$'s are of degree $\geq 2+2+2+1=7$ in $G-v$. (Namely, each of the colors $c(v_{i-1}),c(v_{i+1}),c(x)$ yields a summand of $2$, and the summand of $1$ refers to the color blocked by $v_i$ due to `oddness' in $N_{G-v}(v_i)$).
However, this yields at least three $8^+$-neighbors of $v$ in $G$, contradicting Claim~6.\qed

\bigskip

Let $v_j,v_{j+2}$ fulfil Claim~7. Consider the graph $(G-v)/\{v_j,v_{j+2}\}$ obtained from $G-v$ by identifying $v_j$ and $v_{j+2}$, and then deleting one of the arising two parallel edges (between the new vertex $\{v_j,v_{j+2}\}$ and the vertex $v_{j+1}$). Take a nice coloring of $(G-v)/\{v_j,v_{j+2}\}$, and look into the inherited coloring $c$ of $G-v$.

\bigskip

\noindent \textbf{Claim 8.} \textit{All $v_i$'s are of even degree in $G$, and the coloring $c$ is nice.}

\medskip

\noindent
Notice that besides the odd-condition at $v_{j+1}$, the coloring $c$ is nice on $G-v$. If $d_G(v_{j+1})$ is odd, then when adding $v$ to $G-v$ we do not need to care about odd-condition at $v_{j+1}$, as it will automatically hold. But then we need to avoid only $8$ colors when coloring $v$, which is doable, a contradiction. Hence $d_G(v_{j+1})$ is even. Consequently, $c$ is a nice coloring of $G-v$ (as $d_{G-v}(v_{j+1})$ is odd and the rest of the conditions are fine). Since $c$ does not extend to a nice coloring of $G$, by reasoning in the same manner as at the beginning of the proof of Claim~7, we are able to conclude that $|c(N_G(v))|\geq4$. On the other hand, by the construction of $c$ we clearly have $|c(N_G(v))|<5$. So it must be that exactly four colors appear in $N_G(v)$. Moreover, as $c$ is non-extendable, each of the vertices $v_1,v_2,v_3,v_4,v_5$ blocks at $v$ a separate color $\notin c(N_G(v))$ in regard to already (and uniquely) fulfilled `oddness' in its neighborhood in $G-v$. However, this readily implies that each $v_i$ is of even degree in $G$.\qed

\begin{figure}[ht!]
	$$
		\includegraphics[scale=0.6]{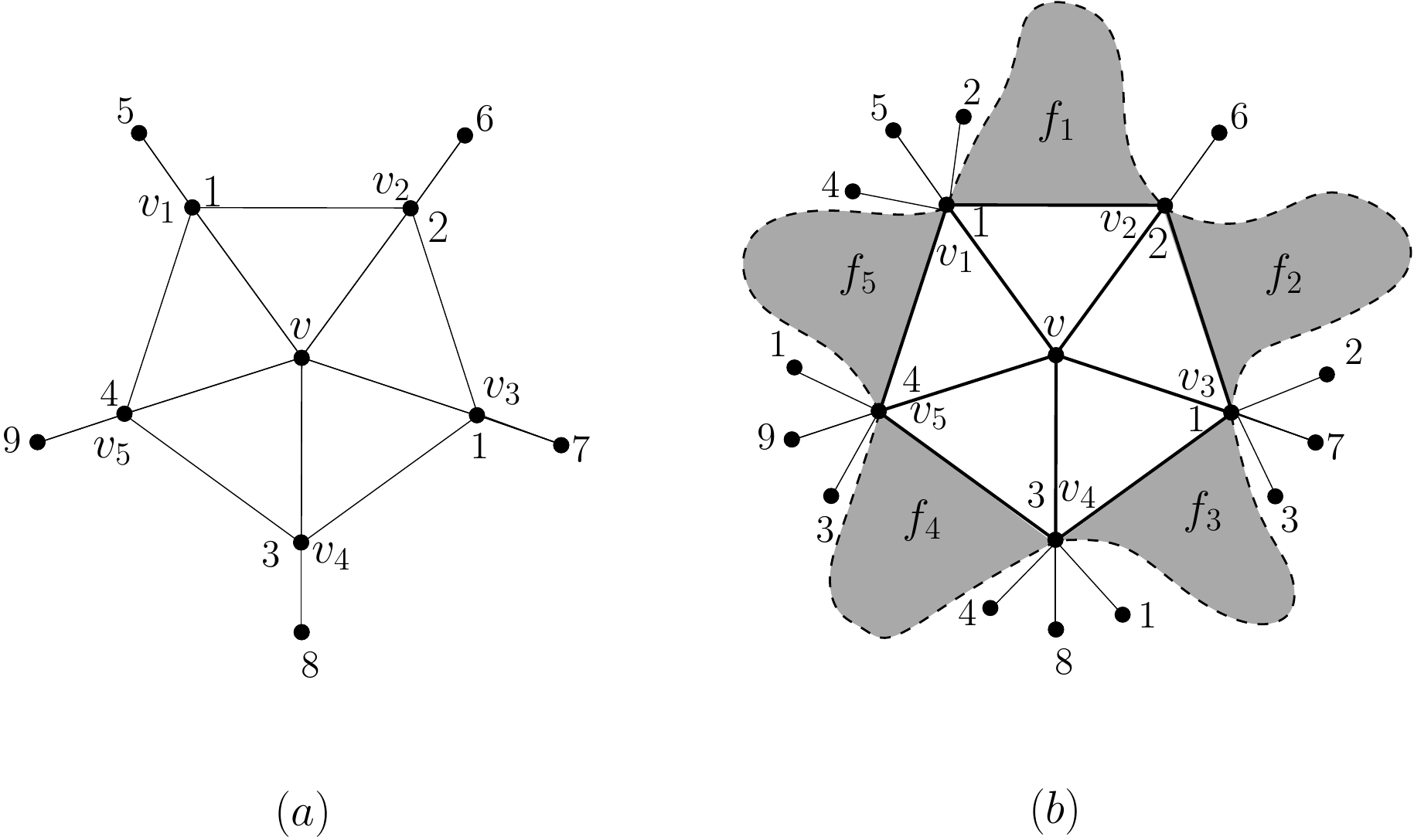}
	$$
	\caption{Local color distribution, guaranteed `free hanging' colors and the faces adjacent with the local triangulation around $v$.}
	\label{fig:localfaces}
\end{figure}

The proof of Claim~8 certifies that, upon permuting colors, the following color distribution occurs under $c$: the colors $1,2,1,3,4$ are used on $v_1,v_2,v_3,v_4,v_5$, respectively, and each of the colors $5,6,7,8,9$ happens to be the only color with an odd number of appearances on a separate neighborhood $N_{G-v}(v_i)$ (cf. Figure~\ref{fig:localfaces}(a)). Moreover, as colors $5,6,7,8,9$ are `exclusive' in regard to `oddness' on $N_{G-v}(v_1),N_{G-v}(v_2),N_{G-v}(v_3),N_{G-v}(v_4),N_{G-v}(v_5)$, respectively, any of the colors $1,2,3,4$ has an even (possibly $0$) number of occurrences  in each set $N_{G-v}(v_i)$ $(i=1,\ldots,5)$. Consequently, the situation depicted in Figure~\ref{fig:localfaces}(b) is present, that is, at four of the $v_i$'s (namely, $v_1,v_2,v_4,v_5$) we are guaranteed two more `free hanging' colors per vertex.
Let $f_i$ $(i=1,\ldots,5)$ be the face incident with $v_iv_{i+1}$ but not with $v$ (cf. Figure~\ref{fig:localfaces}(b)). Note that $f_1,f_2,f_3,f_4,f_5$ need not be pairwise distinct. We say that $f_i$ is \textit{convenient} if it is a $4^+$-face and $v_i,v_{i+1}$ are $6$-vertices whose only common neighbor is $v$.


\bigskip

\noindent \textbf{Claim 9.} \textit{If all of $v_1,v_2,v_3,v_4,v_5$ are $6$-vertices in $G$, then at least two members of the list $f_1,f_2,f_3,f_4,f_5$ are convenient.}

\medskip

\noindent It suffices to observe the following: if a given $f_k$ is a $3$-face, then the vertices $v_k$ and $v_{k+1}$ must share a free hanging color. Since all the $v_i$'s are $6$-vertices, it follows that $v_3$ and $v_4$ do not have a common free hanging color. Consequently, $f_3$ is convenient. Quite similarly, $f_5$ is convenient.\qed

\bigskip

Following the same line of reasoning, we also conclude the following:

\bigskip

\noindent \textbf{Claim 10.} \textit{If precisely one of $v_1,v_2,v_3,v_4,v_5$ is an $8^+$-vertex in $G$, then some $f_i$ is convenient.}

\medskip

\noindent Indeed, we may assume that neither $v_1$ nor $v_5$ is the $8^+$-neighbor of $v$. Then $v_1,v_5$ are $6$-vertices without a common free hanging color. Consequently, $f_5$ is convenient.\qed

\bigskip

Our next (and final) claim assures that every convenient face $f_i$ sends charge $\frac{1}{2}$ to $v$, in accordance with (R3).

\bigskip

\noindent \textbf{Claim 11.} \textit{No convenient face is a quadrangle $ABCD$ where $A,B$ are $5$-vertices and $C,D$ are $6$-vertices.}

\medskip

\noindent Arguing by contradiction, we suppose that the situation depicted in Figure~\ref{fig:nice}(a) is present. Without loss of generality, we may assume that $B$ and $D$ are non-adjacent and $N_G(B)\cap N_G(D)=\{A,C\}$. Indeed, if each of the pairs of vertices $A,C$ and $B,D$ are either adjacent or share a common neighbor outside the set $\{A,B,C,D\}$, then by planarity, the situation depicted in Figure~\ref{fig:nice}(b) occurs. However, then $C$ and $D$ have a common neighbor $\neq v$, which contradicts that the considered face is convenient.

\begin{figure}[ht!]
	$$
		\includegraphics[scale=0.6]{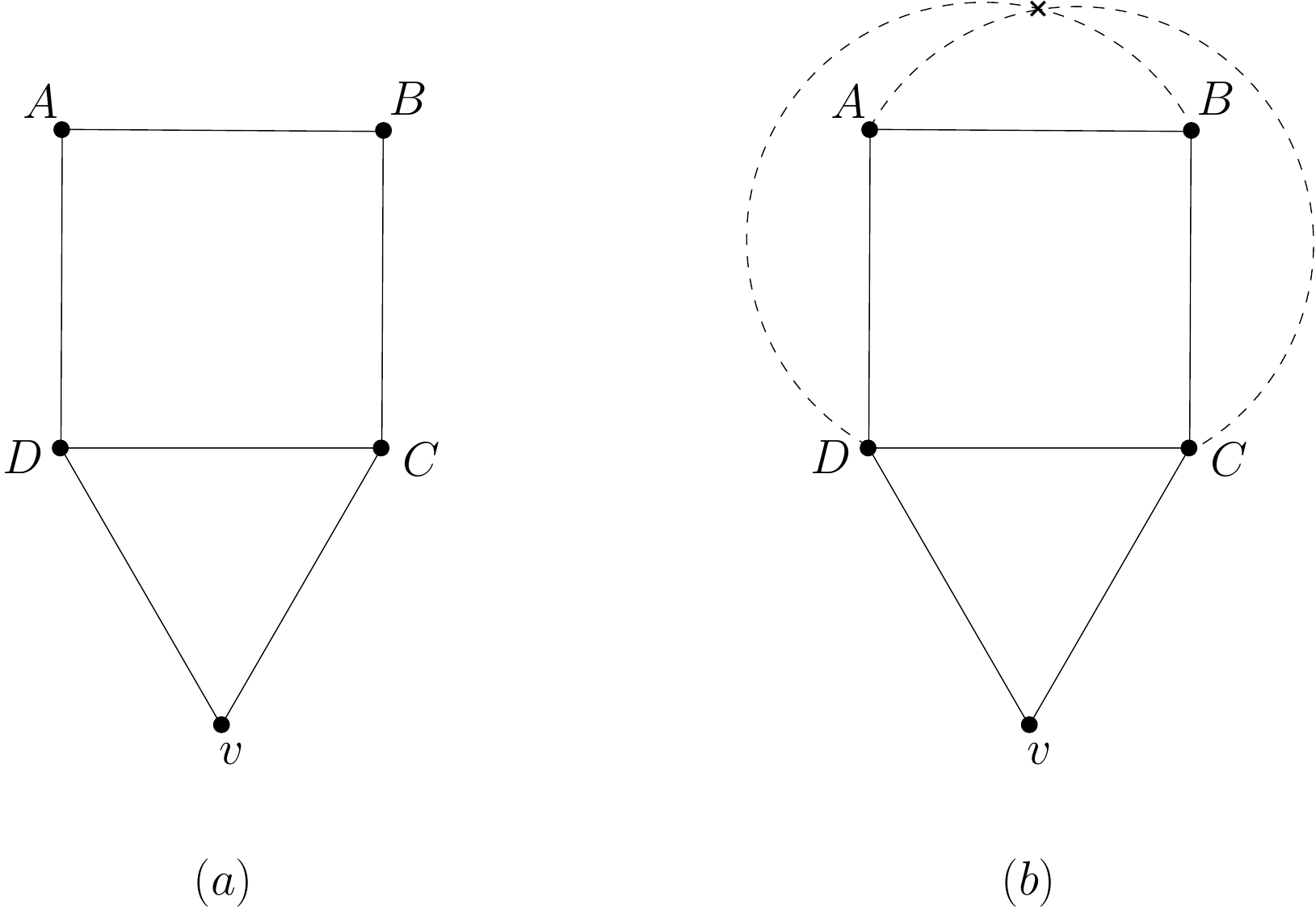}
	$$
	\caption{A convenient face next to the $5$-vertex $v$ that happens to be a quadrangle $ABCD$, where $A,B$ are $5$-vertices and $C,D$ are $6$-vertices.}
	\label{fig:nice}
\end{figure}

With our assumption for the pair $B,D$, we look at the graph $G'=(G-\{v,A,C\})/\{B,D\}$, that is, $G'$ is obtained from $G-\{v,A,C\}$ by identifying $B$ and $D$. Since $G'$ is planar and smaller than $G$, it admits a nice coloring. Consider the inherited partial coloring $c$ of $G$. Note that vertices $B$ and $D$ are colored the same under $c$, and the only uncolored vertices are $v,A$ and $C$. We extend $c$ to a nice coloring of $G$ as follows.

First we color $v$, and for the time being we don't care about preserving oddness in $N_{G-A-C}(D)$. Apart from the color $c(D)$ and possibly a second color in regard to unique oddness in $N_{G-\{v,A\}}(C)$, at most $6=3\cdot 2$ more colors are forbidden at $v$
(by its three neighbors $\neq C,D$ and oddness in their respective neighborhoods within $G-\{v,A,C\}$). Thus there is a color which is available for $v$.

Next we color $C$. There are at most $8=1+1+3\cdot2$ colors forbidden at $C$ (namely, the color $c(B)=c(D)$, the color $c(v)$ and possibly six more colors concerning the other three neighbors of $C$ and oddness in their respective neighborhoods within $G-A$). So there is a color available for $C$. Once $C$ is assigned with an available color, note that there is a color with an odd number of appearances in $N_{G-A}(D)$ (because $d_{G-A}(D)=5$).

Finally, we color $A$. Since this is a $5$-vertex, it suffices to choose a color for $A$ so that the properness of the coloring and the oddness in $N_{G-A}(D)$ are preserved. As $B$ and $D$ are colored the same, and $B$ is a $5$-vertex, the number of forbidden colors at $A$ is at most $8=1+1+3\cdot2$ (the color $c(B)=c(D)$, possibly a second color in regard to unique oddness in $N_{G-A}(D)$, and six more colors concerning the other three neighbors of $A$ and oddness in their respective neighborhoods within $G-A$). We conclude that $c$ indeed extends to a nice coloring of $G$, a contradiction.\qed

\bigskip

From Claims~9-11 it follows that the vertex $v$ receives charge at least $1$ during the discharging process, hence it cannot remain to be negatively charged. This contradiction concludes our proof.

\section{Further work}
By Proposition~\ref{prop:F}, there exist plenty of planar graphs with girth $5$ that require five colors for an odd coloring. We are tempted to propose the following.

\begin{conjecture}
\label{odd4}
Every planar graph $G$ of girth at least $6$ has $\chi_o(G)\leq4$.
\end{conjecture}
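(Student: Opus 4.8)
The plan is to reuse the discharging framework of Theorem~\ref{mainresult}, but to cash in the extra sparsity forced by girth at least $6$. I would fix a counterexample $G$ of minimum order; as before $G$ is connected and every proper subgraph (or minor I pass to) admits a proper odd $4$-coloring. For girth at least $6$ every face has length $\ge 6$, so instead of the charge used for the $9$-color proof I would assign to each vertex $v$ the charge $d(v)-3$ and to each face $f$ the charge $\tfrac12 d(f)-3$; Euler's formula then gives
$$\sum_{v\in V(G)}\bigl(d(v)-3\bigr)+\sum_{f\in F(G)}\Bigl(\tfrac{1}{2}\,d(f)-3\Bigr)=-6\,.$$
Here every face satisfies $\tfrac12 d(f)-3\ge 0$, so the only vertices carrying negative charge are the $1$- and $2$-vertices, and since a planar graph of girth $\ge 6$ has maximum average degree less than $3$, such low-degree vertices must exist. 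The entire task is therefore to redistribute charge onto the $1$- and $2$-vertices and, via reducibility, to exclude the local configurations that would leave one of them starved.

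The reducibility lemmas come first. A $1$-vertex $u$ with neighbor $w$ is immediately reducible: in a coloring of $G-u$ only $c(w)$ and a single color witnessing oddness at $w$ are forbidden at $u$, so two of the four colors always remain. Hence I may assume $\delta(G)\ge 2$. The genuinely delicate case — and, I expect, the main obstacle — is the $2$-vertex. Let $v$ have neighbors $x,y$ and take an odd $4$-coloring $c$ of $G-v$. If $c(x)=c(y)$ then no color is odd on $N(v)=\{x,y\}$ and the extension fails outright; while if $c(x)\ne c(y)$ the set consisting of $c(x)$, $c(y)$, and one odd color at each of $x$ and $y$ can already exhaust all four colors. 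Thus, unlike in the $9$-color setting, $2$-vertices are \emph{not} unconditionally reducible, and I would have to classify precisely which $2$-vertex neighborhoods can be handled.

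Two tools look promising for this classification. The first is the parity idea behind Claim~2: an odd-degree neighbor imposes no oddness constraint and so frees a color, so $2$-vertices whose neighbors are of odd degree, or which are adjacent to vertices offering a spare odd color, ought to be reducible. The second is the identification/contraction device of Claims~7–8: replacing $G-v$ by the graph in which $x$ and $y$ are identified forces $c(x)\ne c(y)$ and removes the fatal $c(x)=c(y)$ case. The price is that, for girth $\ge 6$, such an identification must be shown to preserve planarity \emph{and} keep the girth at least $6$ — identifying two vertices at distance $2$ can create short cycles — so this step would require the same kind of ``no short connecting path'' side conditions that appear in Claim~7. Once enough $2$-vertex (and likely $3$-vertex) configurations are proved reducible, the discharging rules would route the surplus of the incident $\ge 6$-faces to the low-degree vertices; an analog of Claim~3, forbidding long runs of $2$-vertices along a facial walk, bounds how many such vertices a single face must feed, so that every $1$- and $2$-vertex reaches charge $\ge 0$ while no face drops below $0$, contradicting the total $-6$.

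The hard part will be exactly this complete classification of reducible low-degree neighborhoods under a four-color budget: there is essentially no slack, so every extension argument must simultaneously respect properness, the oddness demand at the vertex itself, and the oddness demands it perturbs at its neighbors, all within four colors. I expect the decisive configurations to hinge on the parity of neighbor degrees (even-degree neighbors being the only ones that block colors, cf.\ Claim~2), and that making the discharging close will force the exclusion of rather intricate local patterns — together with, for any contraction-based reduction, a careful verification that girth $\ge 6$ is maintained.
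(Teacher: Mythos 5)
This statement is Conjecture~\ref{odd4} of the paper: it is posed as an \emph{open problem}, and the paper contains no proof of it. On the contrary, the authors point out that it would imply the Four Color Theorem (take the complete subdivision $S(G)$ of an arbitrary planar graph $G$; it has girth at least $6$, and the odd condition at each degree-$2$ subdivision vertex forces its two neighbors to get distinct colors, so an odd $4$-coloring of $S(G)$ restricts to a proper $4$-coloring of $G$). Any correct proof must therefore carry 4CT-level difficulty, and your proposal is, by your own account, a plan rather than a proof: you flag that $2$-vertices are not unconditionally reducible under a four-color budget and leave the ``complete classification of reducible low-degree neighborhoods'' to be done. That classification is not a loose end but the entire problem. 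Your charge identity shows every planar graph of girth at least $6$ has a vertex of degree at most $2$; you dispose of $1$-vertices, so in a minimum counterexample everything hinges on $2$-vertices. Moreover, the identification device you borrow from Claims~7 and~8 genuinely fails in this setting: for a $2$-vertex $v$ with neighbors $x,y$, girth at least $6$ only guarantees $\mathrm{dist}_{G-v}(x,y)\ge 4$, and identifying $x$ with $y$ when this distance is $4$ or $5$ creates a $4$- or $5$-cycle, so the resulting minor leaves the class and the induction collapses. You acknowledge needing a ``no short connecting path'' side condition, but you neither prove it can be arranged nor indicate how; that is precisely the open content of the conjecture.

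Independently of reducibility, the discharging half cannot close as you describe it. A $6$-face receives charge $\tfrac12\cdot 6-3=0$, so ``routing the surplus of the incident $\ge6$-faces'' to starved $1$- and $2$-vertices is vacuous exactly in the tight case: only $7^+$-faces and $4^+$-vertices own any surplus. The cycle $C_6$ witnesses this concretely: it is planar of girth $6$, both of its faces carry charge $0$, each vertex carries $-1$, the total is $-6$, and no rule of the kind you propose can move any charge; such graphs must be eliminated by reducibility alone (indeed $\chi_o(C_6)=3$ via the coloring $1,2,3,1,2,3$, but nothing in your framework produces this). So both halves of the plan --- the reducibility lemmas and the discharging rules --- have genuine holes, and since the paper itself offers no proof to fall back on, the proposal does not establish the statement.
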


If true, Conjecture~\ref{odd4} would imply the Four Color Theorem (4CT). Indeed, say $G$ is an arbitrary planar graph. Subdivide every edge of $G$ once, i.e., consider the complete subdivision $S(G)$. As girth $g(S(G))\ge6$, take an odd $4$-coloring $c$ of $S(G)$. The restriction of $c$ to $V(G)$ is a proper $4$-coloring of $G$.

\medskip

Although the problem of odd coloring is intriguing already in the class of plane graphs,
the concept can be naturally studied also for graphs embedded in higher surfaces.
For a surface $\Sigma$, we define the \textit{odd chromatic number} of $\Sigma$,
\[\chi_o(\Sigma) = \max_{G\hookrightarrow \Sigma}\chi_o(G),\]
as the maximum of $\chi_o(G)$ over all graphs $G$ embedded into $\Sigma$. Conjecture~\ref{conj:1} and Proposition~\ref{prop:F} combined purport that $\chi_o(S_0) = 5$, where $S_0$ is the sphere. It would be interesting
to have a similar characterization to the Heawood number for other surfaces of higher
genus.
\begin{problem}
Determine $\chi_o(\Sigma)$ for surfaces $\Sigma$ of higher genus.
\end{problem}

Possibly the odd chromatic number of higher surfaces coincides with the corresponding Heawood number.

\bigskip
\noindent
{\bf Acknowledgements.}
This work is partially supported by ARRS Program P1-0383 and ARRS Project J1-3002.

\bibliographystyle{plain}

\end{document}